\newcommand{\xc}{\mathop\mathrm{xc}\nolimits}
\newcommand{\rkn}{\mathop\mathrm{rk_+}\nolimits}
\newcommand{\hsb}{\mathop\mathrm{hsb}\nolimits}
\newcommand{\conv}{\mathop\mathrm{conv}\nolimits}
\newcommand{\aff}{\mathop\mathrm{aff}\nolimits}
\newcommand{\R}{\mathbb{R}}
\newcommand{\N}{\mathbb{N}}
\renewcommand{\subset}{\subseteq}
\newcommand{\norm}[2][\infty]{\|#2\|_#1}
\newcommand{\Pct}{P_\mathrm{ct}}
\newcommand{\Pst}{P_\mathrm{st}}
\newcommand{\Symm}{\mathfrak{S}_n}
\newtheorem{theorem}{Theorem}
\newtheorem{lemma}{Lemma}
\newtheorem{proposition}{Proposition}
\crefname{theorem}{Theorem}{Theorems}
\crefname{lemma}{Lemma}{Lemmas}
\crefname{corollary}{Corollary}{Corollaries}
\crefname{proposition}{Proposition}{Propositions}
\crefname{equation}{}{}
\crefname{section}{Section}{Sections}
\newlist{myenumerate}{enumerate}{1}
\setlist[myenumerate]{
  label={\itshape(\roman*)},  
  ref={\itshape(\roman*)},    
}
\crefname{myenumeratei}{}{}
\begin{document}

\title{
  Limitations of the Hyperplane Separation Technique  for Bounding the Extension Complexity of Polytopes%
}

\author{Matthias Brugger%
  \thanks{Operations Research, Department of Mathematics, Technische Universit\"at M\"unchen, Germany. \newline 
  E-mail: \texttt{matthias.brugger@tum.de}.
  \newline
  Supported by the Alexander von Humboldt Foundation with funds from the German Federal Ministry of Education and Research (BMBF).}
}

\date{}

\maketitle

\begin{abstract}
We illustrate the limitations of the hyperplane separation bound,
a non-combinatorial lower bound on the extension complexity of a polytope.
Most notably, this bounding technique is used by Rothvo{\ss} (J ACM 64.6:41, 2017)
to establish an exponential lower bound for the perfect matching polytope.
We point out that the technique is sensitive to the particular choice of slack matrix.
For the canonical slack matrices of the spanning tree polytope and the completion time polytope,
we show that the lower bounds produced by the hyperplane separation method are trivial.
These bounds may, however, be strengthened by normalizing rows and columns of the slack matrices.
\end{abstract}


\section{Introduction} \label{sec:intro}

The \emph{extension complexity} of a polytope $P$, denoted by $\xc(P)$, is the minimum number of facets 
of any polytope $Q$ that affinely projects onto $P$.
A linear description of such a polytope $Q$ (together with the corresponding 
projection) is an \emph{extended formulation} of $P$.
If we define the \emph{size} of an extended formulation as the number of its inequalities,
the minimum size of any extended formulation of $P$ equals $\xc(P)$.


Building on Yannakakis' seminal work \cite{Yan}, there has recently been a renewed interest in the study of extended 
formulations (see, e.g., \cite{AF,FFG+,FKP+,FMP+,FRT,KPT,KW,Rot1,Rot}). For many polytopes associated with \textbf{NP}-hard combinatorial optimization problems, we now know that their extension complexity
cannot be bounded by a polynomial in their dimension; among them are TSP polytopes, cut and correlation polytopes, and stable set polytopes \cite{FMP+,KW}.
An exponential lower bound also holds for the extension complexity of the (perfect) matching polytope \cite{Rot}
(even though one can optimize over it in polynomial time).
Well-known polytopes that do admit nontrivial polynomial-size extended formulations include, among many others, parity polytopes \cite{Yan,CK}, independence polytopes of regular matroids \cite{AF}, and two families of polytopes considered here, spanning tree polytopes and completion time polytopes.
We refer to the surveys by Conforti et al.\ \cite{CCZ} and Kaibel \cite{Kai} for an overview and more examples.


The \emph{spanning tree polytope} of a connected graph $G=(V,E)$ is the convex hull of the incidence vectors of the spanning trees in $G$,
\begin{equation} \label{eq:st_polytope}
  \Pst(G) :=
  \conv \left\{
    \chi(T) \in \{0,1\}^E  \colon T \subset E \text{ is a spanning tree in } G 
  \right\},
\end{equation}
where $\chi(T)$ denotes the incidence vector of $T$.
Although $\Pst(G)$ has exponentially many facets in general,
there are extended formulations of size $O(|V| \, |E|)$ due to Wong \cite{Won} and Martin \cite{Mar} (see also \cite{Yan,CKW+}).
Special classes of graphs admit even smaller extended formulations:
For instance, Williams \cite{Wil} gives a formulation of size $O(|V|)$ for planar graphs.
Some progress has also been made for graphs of bounded genus, more generally, by Fiorini et al.\ \cite{FHJ+}.

On the other hand, it is known that the extension complexity of a polytope is at least its dimension \cite{FKP+}.
Thus, if $K_n$ is the complete graph on $n$ vertices, $\Omega(n^2)$ is a trivial lower bound on the extension complexity of $\Pst(K_n)$.
The question whether this bound can be improved is open \cite{Wel}.
Khoshkhah and Theis \cite{KT} show that the so-called \emph{rectangle covering lower bound} achieves at most $O(n^2 \log n)$.
This is a \emph{combinatorial} lower bound, that is, one that depends only on the vertex-facet incidence structure of the 
polytope and, thus, is unable to distinguish between combinatorially equivalent polytopes \cite{FKP+}.
In \cite{KT}, the authors ask whether using non-combinatorial techniques instead may lead to stronger lower bounds.

One candidate is the \emph{hyperplane separation bound} proposed by Fiorini \cite{Fio} 
and applied by Rothvo{\ss} \cite{Rot} in his proof of the exponential lower bound for the matching polytope.
It is a lower bound on the extension complexity of a polytope $P$ 
that, for every inequality in a given linear description of $P$, not only depends on whether a vertex is incident with the corresponding face of $P$ but also takes into account the slack of the vertex in the inequality.
This information defines a \emph{slack matrix} of $P$.
We show that, for the slack matrix obtained from Edmonds' \cite{Edm} canonical description of $\Pst(K_n)$,
the hyperplane separation technique fails to produce a lower bound stronger than $\Omega(n^2)$. In this sense, the trivial dimension bound is already at least as strong. 
Our proof in \cref{sec:sptree} relies on a dual interpretation of the method, which will be explained in \cref{sec:prelim}.

At the same time, we stress that our result does not rule out the possibility of obtaining meaningful bounds 
for different slack matrices of $\Pst(K_n)$.
For instance, one may rescale the inequalities describing $\Pst(K_n)$ or add redundant linear inequalities to the description.
\Cref{sec:scale} studies the effect of these operations on the hyperplane separation bound.
In particular, the hyperplane separation bound is not invariant under scaling the rows and columns of a given slack matrix. 
This is a property that is shared with the norm-based lower bounds of similar flavour introduced by Fawzi and Parrilo \cite{FPa,FPb}. Which scalings of rows and columns produce the strongest bounds is left as an open question in \cite{FPa}.
We address this issue in \cref{sec:scale} and provide a partial answer: If one rescales the rows in such a way that the maximum entry in every row equals one, and proceeds analogously with the columns, the hyperplane separation bound will not decrease.

The limitations of the hyperplane separation method can be observed in another family of well-understood polytopes as well. 
Given a graph $G=(V,E)$ on $V=[n]$, a \emph{graphic zonotope} of $G$ is the Minkowski sum of $|E|$ line segments in the 
directions $\{u^j - u^i \}_{ij \in E}$ (see~\cite{PRW}), where $u^i$ denotes the $i$th canonical unit vector in $\R^n$.
Every graphic zonotope of $G$ is the affine linear image of the hypercube $[0,1]^{|E|}$ and, hence, its extension
complexity is at most $2|E| \le n(n-1)$.
In fact, no smaller extended formulation is known to date, not even for \emph{completion time polytopes},
a well-known subclass of graphic zonotopes of $K_n$. Their facets have been described by Wolsey \cite{Wol} 
(who also first observed the fact that they are zonotopes, see the remark in \cite{KP}) and Queyranne \cite{Que}.
Completion time polytopes will be discussed in more detail in \cref{sec:zonotopes}.

Arguably the simplest of all completion time polytopes is the $n$th \emph{permutahedron}, which is defined as 
$
  \conv\{ (\pi(1),\dots,\pi(n)) \colon \pi \in \Symm \},
$
where $\Symm$ denotes the symmetric group on $[n]$.
For this polytope, Goemans \cite{Goe} gives an asymptotically minimal extended formulation of size $\Theta(n \log n)$.
The lower bound in \cite{Goe} is established via a purely combinatorial argument.
Since any two graphic zonotopes of $K_n$ are combinatorially equivalent (see \cref{sec:zonotopes}), $\Omega(n \log n)$ is therefore best possible for any combinatorial lower bound on the extension complexity of graphic zonotopes of $K_n$.

In \cref{sec:zonotopes}, we first give a description of the facets of graphic zonotopes. This generalizes the canonical linear description of completion time polytopes in \cite{Wol,Que}.
For this description and the resulting slack matrix, we then show that the hyperplane separation bound is at most a constant.


\section{Slack matrices and the hyperplane separation bound} \label{sec:prelim}

Given a nonnegative matrix $S \in \R^{m \times n}_{\ge 0}$, the \emph{nonnegative rank} of $S$, denoted by $\rkn(S)$, 
is defined as the minimum $r \in \N$ such that $S = UV$ for two nonnegative matrices $U \in \R^{m \times r}_{\ge 0}, V \in \R^{r \times n}_{\ge 0}$.
Equivalently, it is the minimum $r \in \N$ such that $S$ can be written as the sum of $r$ nonnegative matrices of rank one \cite{CR}.

Consider a polytope $P = \conv(X) = \{ x \in \R^n \colon Ax \le b, x \in \aff(P) \}$ for some finite set 
$X=\{x^1,\dots,x^v\} \subset \R^n$ and $A \in \R^{f \times n}, b \in \R^f$ such that every inequality in $Ax \le b$ defines a nonempty face of $P$.
The $f \times v$ matrix whose $j$th column equals $b-Ax^j$ is a \emph{slack matrix} of $P$.
If $X$ is the set of vertices of $P$, we refer to the corresponding slack matrix as
the slack matrix of $P$ \emph{with respect to} the linear description above.
In particular, any slack matrix of a polytope is a nonnegative matrix whose nonnegative rank satisfies the following property
due to Yannakakis \cite{Yan}.
\begin{proposition} \label{prop:Yannakakis}
Let $S$ be a slack matrix of a polytope $P$. Then $\xc(P) = \rkn(S)$.
\end{proposition}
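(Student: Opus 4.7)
The plan is to prove both inequalities $\xc(P) \le \rkn(S)$ and $\xc(P) \ge \rkn(S)$, which together yield the desired equality.

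For $\xc(P) \le \rkn(S)$, I would start from an arbitrary nonnegative factorization $S = UV$ with $U \in \R^{f \times r}_{\ge 0}$ and $V \in \R^{r \times v}_{\ge 0}$, where $r = \rkn(S)$, and build from it an extended formulation of $P$ of size $r$. The natural candidate is
\[
  Q := \{ (x,y) \in \R^n \times \R^r : x \in \aff(P),\ Ax + Uy = b,\ y \ge 0 \},
\]
whose only facet-defining inequalities are the $r$ nonnegativity constraints $y \ge 0$. I would then verify that the linear projection $\pi(x,y) := x$ sends $Q$ onto $P$. Each vertex $x^j$ lifts to $(x^j, V_{\cdot j}) \in Q$ because $Ax^j + U V_{\cdot j} = Ax^j + S_{\cdot j} = b$ by definition of the slack matrix, giving $P = \conv(X) \subset \pi(Q)$. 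Conversely, any $(x,y) \in Q$ satisfies $x \in \aff(P)$ and $Ax = b - Uy \le b$ since both $U$ and $y$ are nonnegative, so $\pi(x,y) \in P$.

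For the reverse inequality, I would suppose $P = \pi(Q)$ for an affine map $\pi : \R^N \to \R^n$ and a polytope
\[
  Q = \{ y \in \R^N : Cy \le d,\ Dy = e \}
\]
whose inequality part $Cy \le d$ consists of exactly $r = \xc(P)$ rows. Each inequality $a_i^T x \le b_i$ from the given description of $P$ pulls back to an inequality on $Q$ whose minimum slack equals $0$, since the corresponding face of $P$ is nonempty by assumption. By strong LP duality (equivalently, the affine form of Farkas' lemma), there exist $\lambda_i \in \R^r_{\ge 0}$ and some $\mu_i$ such that the identity
\[
  b_i - a_i^T \pi(y) = \lambda_i^T (d - Cy) + \mu_i^T (e - Dy)
\]
holds for every $y \in \R^N$. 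For each vertex $x^j$, pick any preimage $y^j \in Q$ with $\pi(y^j) = x^j$; then $D y^j = e$, so
\[
  S_{ij} = b_i - a_i^T x^j = \lambda_i^T (d - C y^j).
\]
Stacking the $\lambda_i^T$ as rows of a matrix $U \in \R^{f \times r}_{\ge 0}$ and the slack vectors $d - C y^j$ as columns of a matrix $V \in \R^{r \times v}_{\ge 0}$ yields a nonnegative factorization $S = UV$ of inner dimension $r$, so $\rkn(S) \le r = \xc(P)$.

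The main subtlety lies in the second direction: one has to invoke the version of LP duality that accommodates the affine constraints $Dy = e$, so that only the multipliers $\lambda_i$ attached to $Cy \le d$ are forced to be nonnegative while the $\mu_i$ remain free. The standing hypothesis that every inequality in $Ax \le b$ defines a nonempty face of $P$ is precisely what guarantees optimal dual value $0$ and hence an exact identity, rather than merely a valid inequality. Once this is set up, the rest of the argument is bookkeeping on the shapes of $U$ and $V$, and the verification that the polytope $Q$ built in the first direction projects exactly onto $P$ is similarly routine.
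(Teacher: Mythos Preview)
The paper does not supply a proof of this proposition; it is quoted as a known result due to Yannakakis and used as a black box. Your argument is the standard two-direction proof and is correct in substance.

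The only wrinkle worth flagging is in the direction $\xc(P)\le\rkn(S)$: the lifted set $Q=\{(x,y):x\in\aff(P),\,Ax+Uy=b,\,y\ge 0\}$ is in general an unbounded polyhedron (whenever $U$ admits a nonzero nonnegative kernel vector) rather than a polytope, whereas the paper defines $\xc$ as the minimum number of facets of a \emph{polytope} projecting onto $P$. This is a well-known technicality, typically resolved by noting that the polyhedral and polytopal definitions of extension complexity coincide for $\dim P\ge 1$; since the paper treats the result as background, this does not count against your proposal.
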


This result is the key to many techniques for bounding the extension complexity of $P$.
This paper is concerned with one such technique.
For two matrices $A=(a_{ij})$, $B=(b_{ij}) \in \R^{m \times n}$, we let $\norm{A} := \max_{i,j} |a_{ij}|$
and denote by 
$\langle A,B \rangle := \sum_{i=1}^m \sum_{j=1}^n a_{ij} b_{ij}$
the \emph{Frobenius inner product} of $A$ and $B$.
We will use the same notation $\langle a,b \rangle$ for the inner product of two vectors $a,b \in \R^n$.

\begin{proposition}[Hyperplane separation bound 
  \cite{Rot}%
] \label{prop:HSB}
Let $S \in \R_{\ge 0}^{m \times n}$ not identically zero, 
and let $\mathcal{R}_{m,n}$ denote the set of rank-one matrices in $\{0,1\}^{m \times n}$. We further let
\begin{equation} \label{eq:HSB_def}
  \hsb(S) := \sup \left\{ 
    \frac{\left\langle S,X \right\rangle}{\norm{S} \rho(X)} \colon X \in \R^{m \times n}
  \right\},
\end{equation}
where $\rho(X) := \max \left\{ \left\langle X,R \right\rangle \colon R \in \mathcal{R}_{m,n}\right\}$ for every $X \in \R^{m \times n}$.
Then $ \rkn(S) \ge \hsb(S). $
\end{proposition}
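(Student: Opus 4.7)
The plan is to show that any nonnegative rank-one matrix appearing in a decomposition of $S$ contributes at most $\|S\|_\infty \cdot \rho(X)$ to the Frobenius inner product with $X$, and then sum over the $\rkn(S)$ terms of a minimal decomposition.

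First I would fix a matrix $X$ attaining (or approaching) the supremum in \eqref{eq:HSB_def}; it suffices to prove $\langle S, X \rangle \le \rkn(S) \cdot \|S\|_\infty \cdot \rho(X)$. Let $r := \rkn(S)$ and write $S = \sum_{k=1}^r u_k v_k^T$ with $u_k \in \R^m_{\ge 0}$ and $v_k \in \R^n_{\ge 0}$. Since all summands are entrywise nonnegative, each rank-one term satisfies $u_k v_k^T \le S$ entrywise, and hence $\|u_k v_k^T\|_\infty \le \|S\|_\infty$.

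The heart of the argument is the following rank-one estimate: for any $u \ge 0$, $v \ge 0$,
\[
  \langle uv^T, X \rangle \;\le\; \|u v^T\|_\infty \cdot \rho(X).
\]
To see this, rescale so that $\|u\|_\infty \le 1$ and $\|v\|_\infty \le 1$ (note $\|u\|_\infty \|v\|_\infty = \|uv^T\|_\infty$). Since $[0,1]^m = \conv(\{0,1\}^m)$, one can write $u = \sum_a \alpha_a s_a$ and $v = \sum_b \beta_b t_b$ with $s_a \in \{0,1\}^m$, $t_b \in \{0,1\}^n$, $\alpha_a, \beta_b \ge 0$, and $\sum_a \alpha_a = \sum_b \beta_b = 1$. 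Then $uv^T = \sum_{a,b} \alpha_a \beta_b\, s_a t_b^T$, and each $s_a t_b^T$ lies in $\mathcal{R}_{m,n} \cup \{0\}$, so
\[
  \langle uv^T, X \rangle \;=\; \sum_{a,b} \alpha_a \beta_b \langle s_a t_b^T, X \rangle \;\le\; \sum_{a,b} \alpha_a \beta_b \,\rho(X) \;=\; \rho(X),
\]
which after undoing the scaling gives the claimed estimate.

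Applying this to each term and summing yields
\[
  \langle S, X \rangle \;=\; \sum_{k=1}^r \langle u_k v_k^T, X \rangle \;\le\; \sum_{k=1}^r \|u_k v_k^T\|_\infty \, \rho(X) \;\le\; r \,\|S\|_\infty\, \rho(X),
\]
which is the desired bound. The only delicate point is the rank-one lemma, but it is genuinely just a convex-combination argument once one observes that $\|uv^T\|_\infty$ factors as $\|u\|_\infty \cdot \|v\|_\infty$; otherwise the proof is essentially bookkeeping. No additional hypothesis on $X$ (like nonnegativity) is needed, since the decomposition of $u$ and $v$ into 0/1 indicators is entirely on the $S$-side of the inner product.
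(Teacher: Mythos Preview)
Your proof is correct and is essentially the standard argument from \cite{Rot}: decompose $S$ into $r=\rkn(S)$ nonnegative rank-one terms, bound each term entrywise by $S$, and use that any nonnegative rank-one matrix with $\|\cdot\|_\infty$-norm at most $1$ lies in the convex hull of $\mathcal{R}_{m,n}\cup\{0\}$. The paper itself does not give a proof of this proposition; it is only cited as a known result, so there is nothing to compare against beyond noting that your approach matches the original reference.

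One minor point worth making explicit: in your rank-one lemma you use $\langle s_a t_b^T, X\rangle \le \rho(X)$ also when $s_a t_b^T=0$, which requires $\rho(X)\ge 0$. This is harmless because the supremum in \eqref{eq:HSB_def} is implicitly taken over $X$ with $\rho(X)>0$ (otherwise the ratio is undefined), and indeed the paper immediately normalizes to $\rho(X)=1$ right after stating the proposition.
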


Normalizing $X$ such that $\rho(X) = 1$ in the definition of $\hsb(S)$, 
we may rewrite \cref{eq:HSB_def} as follows: 
\begin{align} \label{eq:HSB_primal}
  \norm{S} \hsb(S) 
  &= \sup \left\{ 
    \left\langle S,X \right\rangle 
    \colon X \in \R^{m \times n},
    \rho(X)=1
  \right\}
  \nonumber \\
  &= \sup \left\{ 
    \left\langle S,X \right\rangle 
    \colon X \in \R^{m \times n},
    \rho(X) \le 1
  \right\} 
  \nonumber \\
  &= \max \left\{ 
    \left\langle S,X \right\rangle 
    \colon X \in \R^{m \times n},
    \left\langle X,R \right\rangle \le 1 \;\forall R \in \mathcal{R}_{m,n}
  \right\}.
\end{align}
In the last step, we used the fact that the supremum of $\langle S,\cdot \rangle$ is finite:
Any $X \in \R^{m \times n}$ with $\rho(X) \le 1$ satisfies $\langle X,R \rangle \le 1$ for all $R$ with singleton support,
that is, every entry of $X$ is at most one.
As $S$ is nonnegative, the sum of its entries is an upper bound on $\langle S,X \rangle$.

Note that \cref{eq:HSB_primal} is a linear program (LP).
From strong LP duality, we obtain the following dual characterization of the hyperplane separation bound,
which already appears in \cite{Sei} and, in a more general context, in \cite{FPb}.
\begin{proposition} \label{prop:HSB_dual}
With $S$ and $\mathcal{R}_{m,n} =: \mathcal{R}$ defined as in \cref{prop:HSB}, we have
\begin{equation} \label{eq:HSB_dual}
  \hsb(S) = 
  \min \left\{
    \norm{S}^{-1}
    \sum_{R \in \mathcal{R}} y_R \colon\; 
    y \in \R^\mathcal{R}_{\ge 0}, \; \sum_{R \in \mathcal{R}} y_R R = S
  \right\}.
\end{equation}
\end{proposition}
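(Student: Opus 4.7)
The plan is to recognize \cref{eq:HSB_primal} as a finite linear program and apply strong LP duality. The excerpt has already done the crucial reformulation: up to the factor $\norm{S}^{-1}$, $\hsb(S)$ equals the optimum of the primal LP
\begin{equation*}
  \max \left\{ \langle S, X \rangle \colon X \in \R^{m \times n}, \; \langle X, R \rangle \le 1 \; \forall R \in \mathcal{R} \right\},
\end{equation*}
in which $X$ is a free variable and there is one inequality constraint per element of the finite set $\mathcal{R} = \mathcal{R}_{m,n}$. The LP dual of a max-LP with free primal variables and $\le$-constraints has nonnegative dual variables and equality constraints, so the dual reads
\begin{equation*}
  \min \Bigl\{ \sum_{R \in \mathcal{R}} y_R \colon y \in \R^{\mathcal{R}}_{\ge 0}, \; \sum_{R \in \mathcal{R}} y_R R = S \Bigr\},
\end{equation*}
which is exactly $\norm{S}$ times the right-hand side of \cref{eq:HSB_dual}. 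The proposition then follows by dividing through by $\norm{S}$, provided strong duality applies.

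To justify strong duality, I would check that both LPs are feasible and that the primal value is finite. Primal feasibility is immediate since $X = 0$ satisfies all constraints. The primal is bounded above: this is precisely the observation made just before the statement of the proposition, namely that $\langle X,R \rangle \le 1$ for every singleton-support rank-one $R \in \mathcal{R}$ forces every entry of $X$ to be at most one, so $\langle S,X \rangle \le \sum_{i,j} s_{ij}$. Dual feasibility is equally easy: writing $E_{ij} \in \mathcal{R}$ for the $0/1$ matrix whose only nonzero entry sits in position $(i,j)$, the assignment $y_{E_{ij}} = s_{ij} \ge 0$ (and $y_R = 0$ otherwise) yields $\sum_R y_R R = S$. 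Since both LPs are feasible and the primal is bounded, strong LP duality gives equality of the two optimal values.

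The only subtlety, and the step where one has to be a little careful, is making sure the LP framework genuinely applies — i.e., that $\mathcal{R}_{m,n}$ is finite so that we are dealing with an LP in finitely many constraints rather than a semi-infinite program. This is fine since $\mathcal{R}_{m,n} \subset \{0,1\}^{m \times n}$ has at most $2^{mn}$ elements (in fact $(2^m - 1)(2^n - 1) + 1$ after accounting for the rank-one structure, but the crude bound suffices). With this observation in place, standard finite-dimensional LP duality closes the argument, and no further work is required beyond rescaling by $\norm{S}^{-1}$ to recover \cref{eq:HSB_dual}. I do not expect any substantive obstacle: the result is essentially a dictionary translation between the primal and dual of a finite LP.
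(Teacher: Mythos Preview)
Your proposal is correct and follows exactly the approach the paper indicates: the paper simply asserts that \cref{eq:HSB_dual} follows from \cref{eq:HSB_primal} by strong LP duality, and you have supplied the routine verification (primal and dual feasibility, primal boundedness, finiteness of $\mathcal{R}_{m,n}$) that makes this appeal rigorous.
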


The feasible region of the LP in \cref{eq:HSB_dual} corresponds to a particular type of nonnegative factorization of $S$, namely the decomposition of $S$ into the weighted sum of 0/1 matrices of rank one. 
Thus, if $S$ has a nonnegative factorization of rank $r$ whose factors are 0/1 matrices, then $\hsb(S) \le r / \norm{S}$ by \cref{prop:HSB_dual}.
This observation will be the key ingredient of our proofs in \cref{sec:applications}.

The hyperplane separation bound $\hsb(S)$ is invariant under multiplying $S$ by positive scalars, under transposition, and under permutations of rows and columns of $S$, respectively.
It further satisfies
the following two useful properties on submatrices, both of which are immediate consequences of \cref{prop:HSB_dual}.
\begin{lemma} \label{cor:HSB_submatrix}
Let $S = (A \;\; B)$
for nonnegative matrices $A$ and $B$. Then
\begin{myenumerate}
  \item $\norm{S} \hsb(S) \ge \norm{A} \hsb(A)$,
    \label{enum:HSB_submatrix_ii}
  \item $\hsb(S) \le \hsb(A)+\hsb(B)$.
    If $A=B$, then $\hsb(S) = \hsb(A)$.
    \label{enum:HSB_submatrix_i}
\end{myenumerate}
\end{lemma}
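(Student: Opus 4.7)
The plan is to invoke the dual characterization in \cref{prop:HSB_dual}. A dual-feasible solution there is a decomposition of the target matrix as a nonnegative combination $\sum_R y_R R$ of rank-one 0/1 matrices. The key structural fact I will use throughout is that any $R \in \mathcal{R}_{m,n}$ is the indicator matrix of a combinatorial rectangle $I \times J \subset [m]\times[n]$, and therefore (a) the restriction of $R$ to any subset of its columns is again a (possibly zero) rank-one 0/1 matrix, and (b) padding $R$ with additional all-zero columns on either side of its column set gives a rank-one 0/1 matrix in the enlarged format.

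For \ref{enum:HSB_submatrix_ii}, I would start with an optimal dual solution $(y_R)$ for $\hsb(S)$, so $S = \sum_R y_R R$ and $\sum_R y_R = \norm{S}\hsb(S)$. Restricting every $R$ to the columns of $A$ produces matrices $R|_A$ that are either zero or again rank-one 0/1 matrices, with $A = \sum_R y_R R|_A$. After deleting the zero terms and merging weights of equal restrictions, this yields a dual-feasible solution for $\hsb(A)$ of total weight at most $\sum_R y_R$, hence $\norm{A}\hsb(A) \le \norm{S}\hsb(S)$.

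For the general statement of \ref{enum:HSB_submatrix_i}, I would do the symmetric construction. Pick optimal dual solutions $(y_R)$ for $A$ and $(z_{R'})$ for $B$, and zero-pad each rectangle appearing in the $A$-decomposition by columns on the right (and each in the $B$-decomposition by columns on the left) to obtain elements of $\mathcal{R}_{m,n}$ whose weighted sum is $S$. The total weight is $\norm{A}\hsb(A)+\norm{B}\hsb(B)$, which gives
\[
  \hsb(S) \le \norm{S}^{-1}\bigl(\norm{A}\hsb(A)+\norm{B}\hsb(B)\bigr) \le \hsb(A)+\hsb(B),
\]
using $\max\{\norm{A},\norm{B}\} \le \norm{S}$. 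For the refined equality when $A=B$, one direction is \ref{enum:HSB_submatrix_ii} together with $\norm{S}=\norm{A}$. For the other direction, take an optimal $y$ for $A$ and, for every rectangle $I\times J$ in its decomposition, replace it by the rectangle $I \times (J \cup J')$, where $J'$ is the copy of $J$ in the second block of columns of $S$. Each such rectangle lies in $\mathcal{R}_{m,n}$, and the resulting weighted sum equals $(A\;A) = S$, yielding $\hsb(S) \le \hsb(A)$.

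I do not anticipate a substantive obstacle: both assertions reduce to bookkeeping once one passes to the dual LP, and the only care needed is the verification that $\mathcal{R}_{m,n}$ is closed under restriction to column subsets and under zero-padding, together with the inequality $\max\{\norm{A},\norm{B}\} \le \norm{S}$ that allows the scaling factors in \ref{enum:HSB_submatrix_i} to be dropped.
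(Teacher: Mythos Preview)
Your proposal is correct and follows exactly the route the paper indicates: the lemma is stated there as an immediate consequence of \cref{prop:HSB_dual}, and your argument spells out precisely the restriction/zero-padding/duplication of rank-one 0/1 rectangles that makes this work. The only cosmetic point is that the paper (and hence the lemma) implicitly assumes the matrices involved are not identically zero so that $\hsb$ is defined, but this does not affect your reasoning.
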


Recall that any two slack matrices of a given polytope have identical nonnegative rank. (This is a consequence of
\cref{prop:Yannakakis}.) In this sense, the nonnegative rank is well-defined for polytopes.
The situation for the hyperplane separation bound, however, is fundamentally different. Before we address this issue more generally in the next section, let us highlight the difference with two examples.

Consider the standard hypercube $C_n = [0,1]^n$ and let
$S_n$ denote its slack matrix w.r.t.\ the (minimal) description $C_n = \{ x \in \R^n \colon 0 \le x_i \le 1,\, i=1,\dots,n\}$.
The inequality $\sum_{i=1}^n x_i \ge 0$ is valid for $C_n$ (defining the vertex $0 \in \R^n$).
Adding this inequality to the minimal description of $C_n$ adds one row to $S_n$, 
which equals the sum of the rows corresponding to the facets defined by $x_i \ge 0$ for $i=1,\dots,n$.
Let $S'_n$ denote the slack matrix with this additional row.
Then we have $\norm{S'_n} = n$ 
and $\norm{S_n} \hsb(S_n) = \norm{S'_n} \hsb(S'_n)$.
Thus, $\hsb(S'_n) = \frac{1}{n} \hsb(S_n)$. 

Not even slack matrices w.r.t.\ \emph{minimal} linear descriptions behave identically under the hyperplane separation bound:
The $n$-simplex spanned by the canonical unit vectors in $\R^n$ and the origin is the set of all $x \in \R^n$ satisfying
$x_1 + \dots + x_n \le 1, x_i \ge 0$ for $i=1,\dots,n-1$, and $\lambda x_n \ge 0$ for any $\lambda\ge 1$.
Every inequality defines a facet of the simplex.
Modulo permutations of rows and columns, the associated slack matrix $S_{n,\lambda}$ is obtained from 
the $(n+1) \times (n+1)$ identity by multiplying the first row by $\lambda$.
One can show that $\hsb(S_{n,\lambda}) = \frac{n}{\lambda} +1$ while $\rkn(S_{n,\lambda}) = n+1$.


\section{Diagonal scalings and redundancy} \label{sec:scale}

Let us first study the effect of scaling rows or columns of a nonnegative matrix $S$.
A \emph{positive diagonal scaling} of $S$ is a matrix $S'$ which can be written as $S' = D_1 S D_2$ 
where $D_1$ and $D_2$ are positive diagonal matrices, i.e., diagonal matrices with positive diagonal elements. 
Note that $\rkn(S') = \rkn(S)$ \cite{CR}, while in the examples in \cref{sec:prelim} we have seen that the hyperplane separation bound may indeed change (cf.~\cite{FPa,FPb}). 
The following lemma is our main ingredient in this section.

\begin{lemma} \label{lem:HSB_scale}
Let $S \in \R^{m \times n}_{\ge 0}$ not identically zero, 
and let $D \in \R^{m \times m}_{\ge 0}$ be a positive diagonal matrix. Then
\[
  \hsb(D S) \le \hsb(S) \frac{\norm{D} \norm{S}}{\norm{D S}}.
\]
\end{lemma}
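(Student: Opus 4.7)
The plan is to use the dual characterization in \cref{prop:HSB_dual} and convert an optimal nonnegative rank-one $0/1$ decomposition of $S$ into one of $DS$, while tracking the total weight of the decomposition. Concretely, write $d_i$ for the $i$th diagonal entry of $D$, fix an optimal decomposition
\[
  S \;=\; \sum_{R \in \mathcal{R}_{m,n}} y_R R, \qquad y \ge 0,
\]
with $\sum_R y_R = \norm{S}\hsb(S)$ by \cref{prop:HSB_dual}. Then
$
  DS = \sum_R y_R (DR),
$
so it suffices to decompose each $DR$ into a nonnegative combination of $0/1$ rank-one matrices with small total weight.

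The key observation, which I expect to be the most delicate step, is that for any $R = u v^\top \in \mathcal{R}_{m,n}$ with $u \in \{0,1\}^m$, $v \in \{0,1\}^n$ and support $I = \{i : u_i = 1\}$, the matrix $DR$ has rows equal to $d_i v^\top$ for $i \in I$ and zero otherwise. Enumerating $I = \{i_1,\ldots,i_k\}$ with $d_{i_1} \ge d_{i_2} \ge \cdots \ge d_{i_k}$ and letting $u^{(j)} \in \{0,1\}^m$ denote the indicator of $\{i_1,\ldots,i_j\}$, the telescoping (or ``staircase'') identity
\[
  DR \;=\; \sum_{j=1}^{k-1} (d_{i_j} - d_{i_{j+1}})\, u^{(j)} v^\top \;+\; d_{i_k}\, u^{(k)} v^\top
\]
expresses $DR$ as a nonnegative combination of matrices in $\mathcal{R}_{m,n}$ whose coefficients sum to $d_{i_1} = \max_{i \in I} d_i \le \norm{D}$.

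Substituting these decompositions into $DS = \sum_R y_R (DR)$ yields a feasible point of the dual program in \cref{eq:HSB_dual} for $DS$ whose objective (before normalization by $\norm{DS}$) is at most
\[
  \norm{D} \sum_R y_R \;=\; \norm{D}\,\norm{S}\,\hsb(S).
\]
Dividing by $\norm{DS}$ and invoking \cref{prop:HSB_dual} for $DS$ gives the claimed inequality. The main routine check is only that the staircase decomposition above is correct when ties or zeros occur among the $d_i$, which is immediate since $D$ is positive and the identity holds entry-wise regardless of the ordering of the $d_{i_j}$.
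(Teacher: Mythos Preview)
Your argument is correct. The staircase decomposition of $DR$ is valid (the coefficients are nonnegative because you sorted the $d_{i_j}$ in decreasing order, and the telescoping sum of the weights is indeed $d_{i_1}\le\norm{D}$), and plugging it into the dual characterization \cref{prop:HSB_dual} gives exactly the claimed bound.

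This is a genuinely different route from the paper's. The paper works on the \emph{primal} side, \cref{eq:HSB_primal}: it takes an optimal $X$ for $DS$, argues via \cref{cor:HSB_submatrix}\cref{enum:HSB_submatrix_ii} that each row contribution $\langle s^i,x^i\rangle$ is nonnegative, and then bounds $\langle DS,X\rangle=\sum_i d_i\langle s^i,x^i\rangle\le\norm{D}\langle S,X\rangle\le\norm{D}\,\norm{S}\,\hsb(S)$. Your approach works on the \emph{dual} side, converting an optimal $0/1$ rank-one decomposition of $S$ into one of $DS$ via the telescoping trick. The paper's version is slightly shorter but relies on the submatrix monotonicity lemma; yours is self-contained from \cref{prop:HSB_dual} and is more constructive, in that it explicitly produces a near-optimal decomposition of $DS$ from one of $S$.
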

\begin{proof}
Let $X \in \R^{m \times n}$ be a feasible solution of the LP in \cref{eq:HSB_primal} that is optimal for $S' := D S$.
We will denote the $i$th row of $S$ and $X$ by $s^i$ and $x^i$, respectively, and the $i$th diagonal element of $D$ by $d_i>0$.
First, observe that $\langle s^i,x^i \rangle \ge 0$ for all $i=1,\dots,m$:
Indeed, if $\langle s^i,x^i \rangle < 0$ for some $i$, let $S'_{-i}$ and $X_{-i}$ be the matrices obtained from $S'$ and $X$
by deleting the $i$th row. Then
\[
  \norm{S'_{-i}} \hsb(S'_{-i}) \ge \langle S'_{-i}, X_{-i} \rangle 
  = \langle S', X \rangle - d_i \langle s^i,x^i \rangle 
  > \langle S', X \rangle = \norm{S'} \hsb(S'),
\]
contradicting \cref{cor:HSB_submatrix}\cref{enum:HSB_submatrix_ii}.
We conclude that
\[
  \langle S', X \rangle = \sum_{i=1}^m d_i \langle s^i,x^i \rangle
  \le \sum_{i=1}^m \norm{D} \langle s^i,x^i \rangle = \norm{D} \langle S,X \rangle
  \le \norm{D} \norm{S} \hsb(S).
  \qedhere
\]
\end{proof}

Since the hyperplane separation bound is invariant under transposition, \cref{lem:HSB_scale} immediately generalizes
to positive diagonal scalings.
\begin{theorem} \label{cor:HSB_scale}
Let $S \in \R^{m \times n}_{\ge 0}$ not identically zero, 
and let $D_1 \in \R^{m \times m}_{\ge 0}, D_2 \in \R^{n \times n}_{\ge 0}$ be positive diagonal matrices. Then 
\[
  \frac{\norm{S}}{\norm{D_1^{-1}} \norm{D_1 S D_2} \norm{D_2^{-1}}} \hsb(S) 
  \le \hsb(D_1 S D_2) \le \hsb(S) \frac{\norm{D_1} \norm{S} \norm{D_2}}{\norm{D_1 S D_2}}.
\]
\end{theorem}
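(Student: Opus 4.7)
The plan is to obtain the upper bound by applying \cref{lem:HSB_scale} twice, once to the row scaling $D_1$ and once to the column scaling $D_2$, and then to deduce the lower bound by applying the upper bound in reverse to the scaled matrix $D_1 S D_2$ with the inverse scalings.

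For the upper bound, I would first invoke \cref{lem:HSB_scale} directly to get
\[
  \hsb(D_1 S) \le \hsb(S) \frac{\norm{D_1}\norm{S}}{\norm{D_1 S}}.
\]
Since $\hsb$ is invariant under transposition (noted in \cref{sec:prelim}), the same lemma applies to column scalings. Applied to the matrix $D_1 S$ with the positive diagonal matrix $D_2$, it yields
\[
  \hsb(D_1 S D_2) \le \hsb(D_1 S) \frac{\norm{D_2} \norm{D_1 S}}{\norm{D_1 S D_2}}.
\]
Multiplying these two inequalities cancels the factor $\norm{D_1 S}$ and produces the desired right-hand inequality.

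For the lower bound, the key observation is that $D_1^{-1}$ and $D_2^{-1}$ are themselves positive diagonal matrices, and $D_1^{-1}(D_1 S D_2) D_2^{-1} = S$. Applying the upper bound just established to $D_1 S D_2$ with scalings $D_1^{-1}$ and $D_2^{-1}$, I obtain
\[
  \hsb(S) \le \hsb(D_1 S D_2)\, \frac{\norm{D_1^{-1}}\, \norm{D_1 S D_2}\, \norm{D_2^{-1}}}{\norm{S}},
\]
and rearranging this gives the stated left-hand inequality.

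Since \cref{lem:HSB_scale} and the transposition invariance of $\hsb$ do all the work, there is no real obstacle: the only thing to be careful about is that $D_1 S$ and $D_1 S D_2$ remain nonnegative and not identically zero, which is immediate because $D_1, D_2$ have strictly positive diagonal entries and $S \neq 0$, so the lemma is applicable at each step.
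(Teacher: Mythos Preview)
Your proposal is correct and follows exactly the approach indicated in the paper: the paper's proof is the single line ``Follows from \cref{lem:HSB_scale}, using the fact that $S = D_1^{-1} (D_1 S D_2) D_2^{-1}$,'' and you have spelled out precisely those two applications of the lemma (via transposition for the column scaling) together with the inversion trick for the lower bound. Your care in noting that $D_1 S$ and $D_1 S D_2$ remain nonzero so that the lemma applies at each step is appropriate.
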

\begin{proof}
Follows from \cref{lem:HSB_scale}, using the fact that $S = D_1^{-1} (D_1 S D_2) D_2^{-1}$.
\end{proof}

Consider a positive diagonal scaling $S'$ of $S$ whose nonzero rows and columns are normalized w.r.t.\ $\norm{\cdot}$. We say that $S'$ is both \emph{row-} and \emph{column-normalized}.
If $S' = D_1 S D_2$ with $D_1$ and $D_2$ chosen in such a way that $D_1 S$ is row-normalized or $S D_2$ is
column-normalized, then $\norm{D_1^{-1}} \norm{D_2^{-1}} = \norm{S}$ and \cref{cor:HSB_scale} implies that $\hsb(S') \ge \hsb(S)$.
In general, though, not every row- and column-normalized diagonal scaling $S'$ results from a pair of diagonal matrices
that satisfies this additional requirement.

Rescaling rows and columns is not the only operation that has an effect on the hyperplane separation bound of a nonnegative matrix $S$. 
Suppose that we add to $S$ a row (column) which is a nonnegative linear combination of rows (columns) of $S$ 
and is therefore redundant (cf.~the example in \cref{sec:prelim}). This operation, too, leaves the nonnegative rank of $S$ unchanged \cite{CR}.
The next lemma bounds the gain on $\hsb(S)$.

\begin{lemma} \label{lem:HSB_addrow}
Let $S \in \R^{m \times n}_{\ge 0}$ and $S' := \begin{pmatrix} S \\ wS \end{pmatrix}$
for a row vector $w \in \R^m_{\ge 0}$ where $\norm{wS} \le \norm{S}$.
Then we have 
\[
  \hsb(S) \le \hsb(S') \le \hsb(S) \, \max\{1, \norm[1]{w}\}.
\]
\end{lemma}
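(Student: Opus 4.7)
The plan is to handle the two inequalities separately, both via the dual characterization in \cref{prop:HSB_dual}. For the lower bound, $S$ is a row-submatrix of $S'$, so after transposition it is a column-submatrix. By the transposition invariance of $\hsb$, \cref{cor:HSB_submatrix}\cref{enum:HSB_submatrix_ii} applied to $S'^\top = (S^\top \; (wS)^\top)$ gives $\norm{S'}\hsb(S') \ge \norm{S}\hsb(S)$. The hypothesis $\norm{wS} \le \norm{S}$ ensures that $\norm{S'} = \norm{S}$, whence $\hsb(S') \ge \hsb(S)$.

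For the upper bound, the strategy is to take an optimal dual decomposition $S = \sum_{R \in \mathcal{R}} y_R R$ with $\sum_R y_R = \norm{S}\hsb(S)$, where each $R = u_R v_R^\top$ with $u_R \in \{0,1\}^m$, $v_R \in \{0,1\}^n$, and to lift it to a decomposition of $S'$ into nonnegative combinations of $0/1$ rank-one matrices in $\{0,1\}^{(m+1)\times n}$. For each $R$, write $\alpha_R := \langle w, u_R \rangle \ge 0$. If $\alpha_R \le 1$, split the weight $y_R$ into $y_R(1-\alpha_R)$ on $\binom{u_R}{0} v_R^\top$ and $y_R \alpha_R$ on $\binom{u_R}{1} v_R^\top$, using total weight $y_R$. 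If $\alpha_R > 1$, use weight $y_R$ on $\binom{u_R}{1} v_R^\top$ together with weight $y_R(\alpha_R - 1)$ on $\binom{0}{1} v_R^\top$ (zero vector in $\{0,1\}^m$ in the first block), using total weight $y_R \alpha_R$.

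A direct check shows that in either case the contribution to the first $m$ rows is $y_R u_R v_R^\top$ and the contribution to the last row is $y_R \alpha_R v_R^\top$. Summing over $R$, the first $m$ rows reassemble $S$, while the last row becomes $\sum_R y_R \langle w, u_R \rangle v_R^\top = w \sum_R y_R u_R v_R^\top = wS$, as required. The total weight is $\sum_R y_R \max\{1,\alpha_R\}$, and since $u_R \in \{0,1\}^m$ and $w \ge 0$ give $\alpha_R \le \norm[1]{w}$, this is at most $\max\{1,\norm[1]{w}\} \sum_R y_R = \max\{1,\norm[1]{w}\}\norm{S}\hsb(S)$. Applying \cref{prop:HSB_dual} to $S'$ and dividing by $\norm{S'} = \norm{S}$ yields $\hsb(S') \le \max\{1,\norm[1]{w}\}\hsb(S)$.

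The only mildly subtle step is verifying that the lifted collection really is a valid dual factorization of $S'$, i.e., that the arithmetic of the last row reproduces $wS$; this is the identity $\sum_R y_R \langle w, u_R \rangle v_R^\top = w S$, which is a straightforward rearrangement. Everything else is bookkeeping with two cases chosen so that $\alpha_R$ is correctly realized as a convex coefficient when $\alpha_R \le 1$ and as a sum of $1$ plus an extra contribution when $\alpha_R > 1$.
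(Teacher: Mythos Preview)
Your proof is correct. The lower bound is handled exactly as in the paper (which just cites \cref{cor:HSB_submatrix}\cref{enum:HSB_submatrix_ii}; you spell out the implicit transposition and the use of $\norm{S'}=\norm{S}$).

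For the upper bound, however, you take a genuinely different route. The paper argues on the \emph{primal} side: assuming first $\norm[1]{w}\le 1$, it takes an optimal $\left(\begin{smallmatrix}X\\x\end{smallmatrix}\right)$ for the LP~\cref{eq:HSB_primal} with input $S'$, folds the last row $x$ back into $X$ via $X' := X + \sum_i w_i\, e_i x$, and checks that $X'$ is feasible for $S$ with $\langle S,X'\rangle = \langle S',\left(\begin{smallmatrix}X\\x\end{smallmatrix}\right)\rangle$. The case $\norm[1]{w}>1$ is then reduced to the first by normalizing the added row and invoking the diagonal-scaling bound of \cref{lem:HSB_scale}. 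Your argument instead works entirely on the \emph{dual} side of \cref{prop:HSB_dual}: you lift each rank-one term $u_Rv_R^\top$ of an optimal decomposition of $S$ to one or two rank-one $0/1$ terms for $S'$, with the per-term case split on $\alpha_R=\langle w,u_R\rangle$. This is self-contained (no appeal to \cref{lem:HSB_scale}) and treats the two regimes uniformly, whereas the paper's approach highlights the link to diagonal scaling that is the theme of the surrounding section.
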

\begin{proof}
The first inequality follows from \cref{cor:HSB_submatrix}\cref{enum:HSB_submatrix_ii}.
In order to show the second inequality, suppose first that $\norm[1]{w} \le 1$ and
let $X \in \R^{m \times n}$, $x \in \R^m$ such that 
$\left(\begin{smallmatrix} X \\ x \end{smallmatrix}\right) \in \R^{(m+1) \times n}$ is an optimal solution of the LP in \cref{eq:HSB_primal} for $S'$.
Adding $w_i x$ to the $i$th row of $X$ for every $i=1,\dots,m$, we obtain a matrix $X' \in \R^{m \times n}$ satisfying
$\langle S,X' \rangle = \langle S,X \rangle + \langle wS, x \rangle$.
It remains to show that $X'$ is a feasible solution of the LP in \cref{eq:HSB_primal} for $S$.
To this end, let $R \in \mathcal{R}_{m,n}$, and let $I \subseteq [m]$ denote the set of indices of rows that $R$ is supported in.
Note that all rows in $I$ are equal to some $r \in \{0,1\}^n$. Hence,
\[
  \langle X', R \rangle = \langle X,R \rangle + \langle x,r \rangle \sum_{i \in I} w_i .
\]
If $\langle x,r \rangle \le 0$, then $\langle X',R \rangle \le \langle X,R \rangle \le 1$ because $X$ is feasible.
Otherwise, $\langle X',R \rangle \le \langle X,R \rangle + \langle x,r \rangle \le 1$
since $\sum_{i \in I} w_i \le \norm[1]{w} \le 1$ and 
$\left( \begin{smallmatrix} R \\ r \end{smallmatrix} \right) \in \mathcal{R}_{m+1,n}$.

Now suppose that $\norm[1]{w}>1$ and let
\[
  S'' := \begin{pmatrix} S \\ \norm[1]{w}^{-1} w S \end{pmatrix}.
\]
Then $\hsb(S'') = \hsb(S)$ and $S'$ is a diagonal scaling of $S''$ where the maximum diagonal element equals 
$\max\{1,\norm[1]{w}\} = \norm[1]{w}$. The statement follows from \cref{lem:HSB_scale}.
\end{proof}

There is no loss of generality in requiring that $\norm{wS} \le \norm{S}$ above:
If $\norm{wS} > \norm{S}$, one may ``normalize'' the redundant row $wS$ by replacing $w$ with $\norm{S} \norm{wS}^{-1} w$. \Cref{cor:HSB_scale} guarantees that this will not decrease $\hsb(S')$.
Further note that \cref{lem:HSB_addrow} applies to any $w$ with $\norm[1]{w} \le 1$
since $\norm{wS} \le \norm[1]{w}\norm{S}$. In this case, we obtain $\hsb(S')=\hsb(S)$.
Finally, the statement of \cref{lem:HSB_addrow} easily extends to the case of adding multiple rows and, by transposition, columns.


\section{Limitations} \label{sec:applications}
\subsection{The spanning tree polytope} \label{sec:sptree}

Let $G=(V,E)$ be a connected graph.
The spanning tree polytope of $G$ given in \cref{eq:st_polytope}
is completely described by the following system due to Edmonds \cite{Edm}:
\begin{equation} \label{eq:sptree}
  \Pst(G) =
  \left\{ 
    x \in \R^E_{\ge 0} \colon
    x(E) = |V|-1, \;
    x(E(U)) \le |U|-1 \quad \forall\, \emptyset \ne U \subset V
  \right\},
\end{equation}
where $E(U)$ is the set of all edges with both endpoints in $U$.
\begin{theorem} \label{thm:HSB_sptree}
Let $G=(V,E)$ be a connected graph 
and let $S_G$ denote the slack matrix of $\Pst(G)$ w.r.t.\ the description \cref{eq:sptree}. Then
$
  \hsb(S_G) = O(|E|).
$
\end{theorem}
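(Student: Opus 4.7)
The plan is to invoke \cref{prop:HSB_dual}: I will exhibit a nonnegative decomposition $S_G = \sum_R y_R R$ into $0/1$ rank-one matrices with total weight $\sum_R y_R \le 2|E|(|V|-1)$ and combine this with a lower bound $\norm{S_G} = \Omega(|V|)$ to conclude $\hsb(S_G) = O(|E|)$.

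The construction starts from the combinatorial expression $S_G[U,T] = |U|-1-|T \cap E(U)| = c(T[U])-1$, where $c(T[U])$ is the number of connected components of the forest $T \cap E(U)$ on vertex set $U$. I fix a linear order on $V$ and, for each row index $U$, set $r(U) := \min U$. Rooting $T$ at $r(U)$, with parent function $\pi^{r(U)}_T\colon V \setminus \{r(U)\} \to V$, every non-root component of $T[U]$ has a unique ``top'' vertex, namely the vertex in the component closest to $r(U)$ in $T$, and a vertex $v \in U \setminus \{r(U)\}$ is such a top if and only if $\pi^{r(U)}_T(v) \notin U$. This yields the identity
\[
  c(T[U]) - 1 \;=\; \sum_{v \in V \setminus \{r(U)\}} [v \in U]\,[\pi^{r(U)}_T(v) \notin U].
\]
Splitting the second indicator over the value $w = \pi^{r(U)}_T(v)$, and using that $\{v,w\} \in T \subseteq E$ on every nonzero term, I introduce, for each triple $(r,v,w)$ with $r \in V$, $v \in V \setminus \{r\}$, and $\{v,w\} \in E$, the matrix
\[
  R_{r,v,w}[U,T] \;:=\; [\min U = r]\,[v \in U]\,[w \notin U]\,[\pi^r_T(v) = w].
\]
The first three factors depend only on $U$ and the last only on $T$, so each $R_{r,v,w}$ is a rank-one $0/1$ matrix. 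Summing over all triples reproduces $S_G$, and the number of triples is at most $2|E|(|V|-1)$ (each of the $2|E|$ oriented edges pairs with $|V|-1$ admissible roots).

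For the matching lower bound on $\norm{S_G}$, I observe that any spanning tree $T$ of $G$ is bipartite; letting $A \subseteq V$ be the larger color class we have $|A| \ge \lceil |V|/2 \rceil$ and $T[A]$ is edgeless, so the slack at the row $U = A$ equals $|A|-1 \ge \lceil |V|/2 \rceil - 1$. Since the other color class is nonempty for $|V| \ge 2$, $A$ is a valid row index of $S_G$. Feeding this into \cref{prop:HSB_dual} gives $\hsb(S_G) \le \norm{S_G}^{-1} \cdot 2|E|(|V|-1) = O(|E|)$ for $|V| \ge 3$, with the trivial case $|V| \le 2$ absorbed into the constant.

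The main conceptual obstacle is arranging the rank-one decomposition so that no sign correction is needed. Rooting every $T$ at a single, globally fixed vertex $r_0$ replaces the key identity by one with an extra term $-[r_0 \notin U]$, which would force a subtraction and violate nonnegativity on roughly half the rows. Letting the root depend on $U$ via $r(U) := \min U$ guarantees $r(U) \in U$ and removes the sign issue, at the cost of multiplying the number of rank-one pieces by a factor of $|V|$; this factor is then exactly absorbed by the $\Omega(|V|)$ lower bound on $\norm{S_G}$, leaving the claimed $O(|E|)$ bound.
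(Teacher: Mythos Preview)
Your argument is correct and follows the same overall strategy as the paper: bound $\norm{S_G}$ from below via the bipartite structure of a spanning tree, exhibit a decomposition of the set-inequality submatrix into $O(|V|\,|E|)$ rank-one $0/1$ matrices with unit weights, and apply \cref{prop:HSB_dual}. The paper obtains this decomposition by quoting a $0/1$ nonnegative factorization of rank $O(|V|\,|E|)$ due to Conforti et al.\ \cite{CCZ} (extracted from Martin's extended formulation \cite{Mar}); your rooted-parent construction with triples $(r,v,w)$ is a direct, self-contained derivation of a factorization of the same order, the row-dependent root $r(U)=\min U$ being precisely the device that keeps all coefficients nonnegative. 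This explicit version is instructive but not a genuinely different method.

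One small omission: the description \cref{eq:sptree} also contains the $|E|$ nonnegativity constraints $x_e \ge 0$, whose slack rows (with entries $[e \in T] \in \{0,1\}$) are not covered by your decomposition indexed only by subsets $U$. The paper splits these rows off via \cref{cor:HSB_submatrix}\cref{enum:HSB_submatrix_i}; equivalently, you can append $|E|$ further unit-weight rank-one $0/1$ matrices, one per nonnegativity row, without affecting the $O(|E|)$ conclusion.
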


\begin{proof}
Since there are $|E|$ many nonnegativity constraints in \cref{eq:sptree}, it suffices to consider 
the row submatrix of $S_G$ restricted to the set inequalities in \cref{eq:sptree} only, which will be denoted by $S_G$ again. 
The bound for the entire slack matrix then follows from \cref{cor:HSB_submatrix}\cref{enum:HSB_submatrix_i}.

We shall index the rows of $S_G$ by the nonempty subsets of $V$ and the columns by the spanning trees in $G$.
The entry in row $U \subset V$ and column $T$ equals $c(U,T)-1$, where $c(U,T)$ denotes the number of connected components of the subgraph $(U,T \cap E(U))$.
First, observe that
\begin{equation} \label{eq:infnorm_sptree}
  \norm{S_G} \ge \tfrac{1}{2} |V|-1.
\end{equation}
For, if $T$ is a spanning tree in $G$ and $U \subset V$ a stable set in $T$,
then $c(U,T) = |U|$.
Because $T$ is a bipartite graph, both vertex classes in a bipartition are stable sets in $T$. At least one of them is of size $|V|/2$.

Based on Martin's extended formulation \cite{Mar}, Conforti et al.\ \cite{CCZ} show that $S_G$ admits a nonnegative factorization 
of rank $O(|V| \, |E|)$ where both factors are 0/1 matrices. From \cref{eq:infnorm_sptree} and \cref{prop:HSB_dual}, 
we conclude that $\hsb(S_G) = O(|E|)$.
\end{proof}

Note that normalizing the rows of $S_G$ (by rescaling the inequalities in \cref{eq:sptree})
produces a matrix $S_G'$ with $\hsb(S_G) \le \hsb(S_G') \le \norm{S_G} \hsb(S_G) = O(|V| \, |E|)$ by \cref{cor:HSB_scale}.
(The columns of $S_G'$ are normalized as well.) 


\subsection{Graphic zonotopes} \label{sec:zonotopes}

A line segment in $\R^n$ is a set $[x,y] := \conv(\{x,y\})$ for some $x,y \in \R^n$. 
Recall from \cref{sec:intro} that a graphic zonotope of a graph $G=(V,E)$ with $V=[n]$ 
is the Minkowski sum of a finite number of line segments, each of which is parallel to $u^j - u^i$ for some $ij \in E$.
Let $A = (a_{ij}) \in \R^{n \times n}_{\ge 0}$ be a symmetric nonnegative matrix. 
We associate with $A$ a zonotope $Z(A) \subset \R^n$ as follows:
\[
  Z(A) := \sum_{1 \le j \le n} a_{jj} u^j + \sum_{1 \le i<j \le n} a_{ij} [u^i, u^j].
\]
Up to translations, the graphic zonotopes of graphs on $n$ vertices are exactly those of the above form for some 
symmetric and nonnegative matrix $A$ (where $a_{ij}>0$ if and only if $ij \in E$).

We will now derive a description of the facets of $Z(A)$, generalizing remarks in \cite[Example~7.15]{Zie} and \cite{Fuj}.
To this end, define the set function $g_A \colon 2^{[n]} \to \R$ by 
\[ 
  [n] \supseteq S \mapsto g_A(S) := \sum\limits_{\substack{i,j \in S \colon \\ i \le j}} a_{ij}.
\]
Note that $g_A$ is supermodular,
and it is strictly supermodular if and only if $A$ is positive.
Using standard arguments (see \cite{Edm70}), one can show that $Z(A)$ is the \emph{supermodular base polytope} 
(see, e.g., \cite{Fuj}) of $g_A$,
\begin{equation} \label{eq:supmod_polytope}
  Z(A) = \left\{ x \in \R^n \colon x([n]) = g_A([n]), \, x(S) \ge g_A(S) \;\forall S \subseteq [n] \right\}.
\end{equation}
Its vertices are in correspondence with the permutations in $\Symm$ via the map
\begin{equation} \label{eq:bij}
\begin{split}
  \Symm \ni \pi \quad\longmapsto\quad 
  x^\pi \in \R^n \,;  \qquad 
  x_j^\pi 
    = \sum_{\substack{ i \in [n] \colon \\ \pi(i) \le \pi(j) }} a_{ij} 
  \; , \quad\, j =1,\dots,n.
\end{split}
\end{equation}

We define a matrix $M_A$ with one row for every nontrivial subset of $[n]$ and one column for every permutation in $\Symm$ as follows:
If $x^\pi$ denotes the vertex of $Z(A)$ induced by $\pi \in \Symm$ via \cref{eq:bij}, the entry of $M_A$ 
in row $S \subsetneq [n], S \ne \emptyset$, and column $\pi$ equals
\begin{equation} \label{eq:zonotope_slack}
  x^\pi(S) - g_A(S)
  = \sum_{\substack{i \in [n], j \in S \colon \\ \pi(i) \le \pi(j)}} a_{ij} 
    - \sum_{\substack{i,j \in S \colon \\ \pi(i) \le \pi(j)}} a_{ij}
  = \sum_{\substack{i \notin S, j \in S \colon \\ \pi(i) \le \pi(j)}} a_{ij},
\end{equation}
using symmetry of $A$ in the first equation.
Thus, $M_A$ is precisely the slack matrix of $Z(A)$ w.r.t.\ its linear description \cref{eq:supmod_polytope}, possibly with repeated columns.

Before we state the main result of this section, observe that the support of $M_A$ is independent of the actual entries of $A$ if $A$ is strictly positive. In this case, \cref{eq:bij} defines a bijection and
all inequalities in \cref{eq:supmod_polytope} for $\emptyset \ne S \subsetneq [n]$ define facets of $Z(A)$ 
since $g_A$ is strictly supermodular \cite{Que}.
Hence, $Z(A)$ and $Z(A')$ are combinatorially equivalent for any two symmetric $A,A' \in \R^{n \times n}_{>0}$.

\begin{theorem} \label{thm:zonotope_hsb}
Let $A \in \R^{n \times n}_{\ge 0}$ be symmetric, 
and let $M_A$ be the slack matrix of $Z(A)$ w.r.t.\ \cref{eq:supmod_polytope}. Then
$
  \hsb(M_A) \le 4.
$
\end{theorem}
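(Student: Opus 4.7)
The plan is to invoke the dual characterization of the hyperplane separation bound in \cref{prop:HSB_dual} by exhibiting an explicit decomposition of $M_A$ as a nonnegative combination of $\{0,1\}$ rank-one matrices and comparing the total weight against $\norm{M_A}$. Setting $W := \sum_{i<j} a_{ij}$, the target is a decomposition of total weight at most $2W$ together with the lower bound $\norm{M_A} \ge W/2$; these combine to give $\hsb(M_A) \le 2W / (W/2) = 4$. We may assume $W>0$, since otherwise $M_A$ is the zero matrix and the claim is vacuous.

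For the decomposition, I would associate with each ordered pair $(k, l)$ of distinct indices in $[n]$ the matrix $R_{kl}$ defined by
\[
  R_{kl}[S, \pi] = \mathbf{1}[k \notin S,\, l \in S] \cdot \mathbf{1}[\pi(k) < \pi(l)].
\]
Since its $(S, \pi)$-entry factors into a row indicator depending only on $S$ and a column indicator depending only on $\pi$, each $R_{kl}$ lies in $\mathcal{R}$. The formula for $M_A[S, \pi]$ in \cref{eq:zonotope_slack}, together with the observation that $i \notin S$, $j \in S$ forces $i \ne j$ (so that $\pi(i) \le \pi(j)$ is the same as $\pi(i) < \pi(j)$), immediately yields
\[
  M_A = \sum_{k \ne l} a_{kl}\, R_{kl}.
\]
The total weight is $\sum_{k \ne l} a_{kl} = 2W$ by symmetry of $A$, so \cref{prop:HSB_dual} gives $\hsb(M_A) \le 2W / \norm{M_A}$.

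To lower-bound $\norm{M_A}$, the key observation is that for any nonempty proper subset $S \subsetneq [n]$ and any permutation $\pi$ placing every element of $[n] \setminus S$ before every element of $S$, the inequality $\pi(i) < \pi(j)$ holds automatically whenever $i \notin S$ and $j \in S$. Substituting into \cref{eq:zonotope_slack} and using symmetry of $A$ to convert the ordered-pair sum into an unordered-pair sum over cut edges, one obtains $M_A[S, \pi] = \sum_{\{u,v\} :\, |\{u,v\} \cap S| = 1} a_{uv}$, i.e.\ the weight of the cut of $S$ in the edge-weighted (multi)graph on $[n]$ with weights $a_{ij}$. A standard random-cut argument (each element of $[n]$ included in $S$ independently with probability $1/2$) then produces a subset $S$ with cut weight at least $W/2$, and the associated suffix permutation gives $\norm{M_A} \ge W/2$, completing the proof.

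I do not foresee any substantial obstacle. The two conceptual steps are spotting the rank-one decomposition indexed by ordered pairs $(k, l)$, and realizing that entries of $M_A$ can already realize arbitrary cut weights via suffix-like permutations; once both are in place, the max-cut probabilistic argument closes the bound at exactly $4$.
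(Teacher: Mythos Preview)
Your proposal is correct and follows essentially the same approach as the paper's proof: both exhibit the decomposition $M_A=\sum_{k\ne l} a_{kl} R_{kl}$ into rank-one $0/1$ matrices indexed by ordered pairs, lower-bound $\norm{M_A}$ by the maximum cut weight (which is at least half the total edge weight), and then invoke \cref{prop:HSB_dual}. The only cosmetic differences are that the paper writes $\pi(i)\le\pi(j)$ rather than the equivalent strict inequality, and states the half-of-total-weight cut fact without spelling out the random-cut argument.
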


\begin{proof}
For every pair $i,j \in [n], i \ne j$, let
\[
  R(i,j) := \{ S \subset [n] \colon i \notin S, j \in S \} \times \{ \pi \in \Symm \colon \pi(i) \le \pi(j) \}
\]
and let $\widehat{R}(i,j)$ denote the unique 0/1 matrix indexed like $M_A$ whose support equals $R(i,j)$.
Note that $\widehat{R}(i,j)$ has rank one and, by \cref{eq:zonotope_slack}, 
\begin{equation*}
  \sum_{i \ne j} a_{ij} \widehat{R}({i,j}) = M_A.
\end{equation*}
Since the expression in \cref{eq:zonotope_slack} is less than or equal to $\sum_{i \notin S, j \in S} a_{ij}$
with equality if $\pi([n] \setminus S) = [n-|S|]$,
we have that
\[
  \norm{M_A} = 
    \max\limits_{S \subseteq [n]} \sum_{i \notin S, j \in S} a_{ij}.
\]
This is, in fact, the maximum weight of a cut in the graph underlying the graphic zonotope $Z(A)$
where the (nonnegative) edge weights are given by $A$.
Since there is always a cut whose weight is at least half the total weight of the edges, we conclude that
$\norm{M_A} \ge \frac{1}{2} \sum_{i < j} a_{ij} = \frac{1}{4} \sum_{i \ne j} a_{ij}$.
The theorem follows from an application of \cref{prop:HSB_dual}.
\end{proof}

In the light of \cref{sec:scale}, it seems possible that a nontrivial bound may be obtained by rescaling the rows and columns 
of the slack matrix $M_A$. In fact, as a byproduct of the proof above, we have that the maximum in row $S \subseteq [n]$ of 
$M_A$ equals the weight of the cut induced by $S$ in the graph underlying $Z(A)$.
Therefore, the gain on $\hsb(M_A)$ that one can expect from normalizing the rows of $M_A$ according to \cref{cor:HSB_scale}
(which actually produces a matrix $M_A'$ that is column-normalized as well) is at most the ratio of the maximum and the minimum weight of a cut. 
While this ratio can grow arbitrarily large, it depends on $A$.
For instance, if $A$ is the $n \times n$ all-one matrix, normalizing $M_A$ does not help much: 
Since every cut in $K_n$ has at least $n-1$ and at most $\lfloor n/2 \rfloor \lceil n/2 \rceil$ edges,
we obtain $\hsb(M_A') = O(n)$.

This special case, however, falls into a well-known subclass of graphic zonotopes.
Consider $n$ jobs with processing times $p=(p_1,\dots,p_n) \in \R^n_{>0}$ to be scheduled on a single machine.
Every permutation $\pi~\in~\Symm$ defines a feasible schedule without idle time
where job $j$ is completed at time $C_j^\pi := \sum_{i \colon \pi(i) \le \pi(j)} p_i$ for $j = 1,\dots,n$.
The \emph{completion time polytope} $\Pct(p)$ is defined as
\[
  \Pct(p) := \conv \left\{
    (C_1^\pi,\dots,C_n^\pi) \in \R^n \colon \pi \in \Symm
  \right\}.
\]
Now let $A \in \R^{n \times n}_{>0}$ be a positive rank-one matrix. It can be shown that $A = pp^T$ for some $p \in \R^n_{>0}$.
Then $Z(A)$ is the image of $\Pct(p)$ under the linear transformation $(x_1,\dots,x_n) \mapsto (p_1 x_1, \dots, p_n x_n)$.
Up to this transformation, the inequalities in \cref{eq:supmod_polytope} coincide with the canonical linear description 
of $\Pct(p)$ due to Wolsey \cite{Wol} and Queyranne \cite{Que}.
In case that $p_j=1$ for all $j=1,\dots,n$, $\Pct(p)$ is the $n$th permutahedron and is equal to $Z(A)$ with the $n \times n$ all-one matrix for $A$.


\section{Concluding remarks}

For both families of polytopes studied in this paper and their canonical slack matrices, 
we have shown that the hyperplane separation technique is unable to improve on the currently best known lower bounds on
their extension complexity.
In contrast to the nonnegative rank, the hyperplane separation bound depends on the choice of slack matrix.
By making a more careful choice, it is conceivable that the technique
does indeed yield more meaningful bounds than the ones in \cref{sec:applications}.

In particular, if one first normalizes the rows of a nonnegative matrix and then normalizes
the columns of the resulting matrix (or vice versa) in the sense of \cref{sec:scale}, this
will only strengthen the hyperplane separation bound while preserving the nonnegative rank.
However, it is not clear which positive diagonal scalings yield the strongest bounds among those that are both row- and column-normalized.
For the polytopes considered in \cref{sec:applications}, normalizing the rows of their canonical slack matrices 
is already sufficient to obtain a row- and column-normalized matrix. How much can one gain by this?
Although there is hope that normalizing may indeed overcome the negative results of \cref{thm:HSB_sptree,thm:zonotope_hsb}, we leave this as an open question.

Another potential way of strengthening the hyperplane separation bound is redundancy achieved by adding nonnegative linear combinations of rows or columns. How does this compare to the gain that is achieved by the best diagonal scalings?
At least adding a row (column) which is a convex combination of rows (columns) has no effect on the hyperplane separation bound.


\medskip \bigskip
\noindent\textbf{Acknowledgements.}
The author is grateful to Andreas S. Schulz and Stefan Weltge for helpful discussions and comments.
He would also like to thank an anonymous referee whose comments on an earlier version greatly improved the paper.


\end{document}